\theoremstyle{break}
\newtheorem{definition}[subsection]{Definition}
\newtheorem{remark}[subsection]{Remark}
\newtheorem{claim}[subsection]{Claim}
\newtheorem{theo}[subsection]{Theorem}
\numberwithin{equation}{section}
\newcommand{\PP}{\mathbb{P}}
\newcommand{\F}{\mathcal{F}}
\newcommand{\E}{\mathcal{E}}
\newcommand{\C}{\mathcal{C}}
\DeclareMathOperator{\rk}{rk}
\DeclareMathOperator{\hh}{h}
\DeclareMathOperator{\H^0}{H^0}
\DeclareMathOperator{\HH}{H}
\DeclareMathOperator{\OO}{\mathcal{O}}
\title{An elementary transformation of vector bundles in $\PP^n$}
\author{
Ben Obiero\footnote{The author benefited from a grant from the National Comission of Science, Technology and Innovation of Kenya.},\\
Department of Pure and Applied Mathematics,\\
Technical University of Kenya.\\
}
\date{\today}
\begin{document}
\maketitle
\begin{abstract}
By considering the equivalence between the category of locally free sheaves and the category of algebraic vector bundles, 
we show how elementary transformations of vector bundles can be used to prove a case of the maximal rank hypothesis. We in turn 
show how this can be applied in the study of minimal free resolutions.\\\\\\
\textbf{Key words:} locally free sheaves, vector bundles, elementary transformations, maximal rank hypotheses, minimal free resolutions.

\end{abstract}
\section{Introduction}
\par In order to understand the geometry of the ideal $I_S$ of $s$ points in $\PP^n$, one source of information is the 
Hilbert function $h_d(I_S)$, which gives the number of degree $d$ generators of $I_S.$ The relation among these generators 
is captured in the free resolution of $I_S.$ Conjectures have been made as to the prescribed form of the minimal free resolution 
for the ideal of points in $\PP^n.$ One such conjecture is the minimal resolution conjecture by Lorenzini \cite{a} for the 
ideal of points in general position. Hirschowitz and Simpson \cite{h} in their study of minimal free resolutions 
showed that in order to prove that an ideal of points in general position has a minimal free resolution of the expected 
form, it suffices to show that some evaluation map is of maximal rank. 
\newline In this paper, we make use of the equivalence between the category of locally free sheaves and the category of algebraic vector bundles to describe an elementary transformations in $\PP^n$ and explain how this elementary 
transformation can be used to prove a case of maximal rank hypothesis.
\par The paper is organized as follows. In the next section, we give an overview of category theory with a view of 
building notation and developing the language used in the future sections. In section \ref{sheavesandvect} we introduce 
locally free sheaves and relate them to vector bundles. We end the section by defining an elementary transformation 
of algebraic vector bundles. The definitions and properties we come across in these sections follow largely from 
\cite{hart} and \cite{igr}. Our main results are found in section \ref{main}, where we present 
an elementary transformation in $\PP^n,$ and prove that the diagram is indeed a diagram of elementary transformation. 
As a conclusion to this section, we show how our results can be used in studying minimal free resolutions.
\section{Some category theory}
\begin{definition}
A category $\mathcal{C}$ consists of a class of objects, denoted by $Obj(\mathcal{C}),$ together with a 
set of morphisms between any pair $X,Y\in Obj(\mathcal{C})$. The set of morphisms from $X$ to $Y$ is denoted 
by $Hom(X,Y)$ and satisfy the following properties;
\begin{enumerate}[a.]
\item given $f_1\in Hom(X,Y)$ and $f_2\in Hom(Y,Z)$, then the composition $f_2\circ f_1\in Hom(X,Z)$. 
\item for every $X\in Obj(\mathcal{C})$ there exist a morphism $id_X\mapsto Hom(X,X)$ which 
is both right and left identity of the composition.
\item given $f_1\in Hom(X,Y)$, $f_2\in Hom(Y,Z)$ and $f_3\in Hom(Z,W),$ $f_1\circ (f_2 \circ f_3)=(f_1\circ f_2) \circ f_3$ 
For every ordered triple $(X,Y,Z)\in Obj(\mathcal{C})$.
\end{enumerate}
\end{definition}
Given a category $\mathcal{C},$ we can define the \textbf{opposite category} $\mathcal{C}^{op}$ by reversing the sense 
of maps in the category $\mathcal{C}.$ That is, the objects in the category $\mathcal{C}^{op}$ is the same as the objects 
in the category $\mathcal{C}$ and for any pair $X,Y$, $Hom_{\mathcal{C}}(X,Y)=Hom_{\mathcal{C}^{op}}(Y,X)$
\par Given two categories $\mathcal{C}_1$ and $\mathcal{C}_2$ we can define a function 
$F:\mathcal{C}_1\mapsto \mathcal{C}_2$ between the two categories. Such a function is called a \textbf{functor}. 
Functors can be covariant or contravariant.
\newline A \textbf{covariant functor} from a category $\mathcal{C}_1$ to a category $\mathcal{C}_2$  consists of 
a map $F$ from $Obj(\mathcal{C}_1)$ to $Obj(\mathcal{C}_2)$ together with, for every pair $(X,Y)\in Obj(\mathcal{C}_1),$ 
a function $F_{X,Y}:Hom(X,Y)\mapsto Hom(F(X),F(Y)),$ such that $F$ commutes with composition and carries $id_X$ to $id_{F(X)}.$
 \newline A \textbf{contravariant functor} $\mathcal{C}_1$ to a category $\mathcal{C}_2$ is a functor $F$ defined as
$F:\mathcal{C}_1^{op}\mapsto \mathcal{C}_2$. In other words it is a functor that reverses the sense of the morphisms.
\par Given two functors $F_1$ and $F_2$ from $\mathcal{C}_1$ to $\mathcal{C}_2,$ a \textbf{natural transformation} of $F_1$ to $F_2$ consists of, for each $X\in(\mathcal{C}_1),$ a morphism $\phi_X:F_1(X)\mapsto F_2(X)$ such that for every morphism 
$f\in Hom(X,Y),$ the diagram 
\[
\begin{tikzcd}
F_1(X)\arrow{d}{\phi_X}\arrow{r}{F_1(f)}&F_1(Y)\arrow{d}{\phi_Y}\\
F_2(X)\arrow{r}{F_2(f)}&F_2(Y)
\end{tikzcd}
\]
is commutative. A \textbf{natural isomorphism} of functors is a natural transformation for which $\phi_X$ is an 
isomorphism. The data of functors $F:\C_1\mapsto\C_2$ and $F':\C_2\mapsto\C_1$ for which $F\circ F'$ is naturally an 
isomorphism to the identity function on $Id_{\C_2}$ on $\C_2$ and $F'\circ F$ is naturally isomorphic to $Id_{\C_1}$ is 
called an \textbf{equivalence of categories}. We say that two categories are equivalent if there exist an equivalence 
between them.
\par A category $\mathcal{C}$ is said to be \textbf{enriched} over a category $\mathcal{D}$ if for every 
$X,Y\in Obj(\mathcal{C}),\ Hom(X,Y)\in Obj(\mathcal{D})$ and the composition 
$Hom(Y,Z)\times Hom(X,Y)\mapsto Hom(Y,Z)$ is a morphism in $\mathcal{D}.$ 
\newline An abelian category is a category enriched with the category of abelian groups with some extra coditions.
\begin{definition}
An abelian category is a category $\mathcal{C}$, such that: for each $X,Y\in Obj(\mathcal{C}),\ Hom(X,Y)$ 
has a structure of an abelian group, and the composition law is linear; finite direct sums exist; every 
morphism has a kernel and a cokernel; every monomorphism is the kernel of its cokernel, every epimorphism 
is the cokernel of its kernel; and finally, every morphism can be factored into an epimorphism followed by 
a monomorphism.
\end{definition}
The category $\textbf{Ab}$ of abelian groups and the category $\textbf{R-Mod}$ of (left) $R$-modules over a 
ring $R$ are examples of abelian categories.  

Given a category $\mathcal{C},$ one can construct a \textbf{complex}. That is, a sequence
\[
\begin{tikzcd}
\cdots\arrow{r}&X_{i-1}\arrow{r}{\delta_i}&X_i\arrow{r}{\delta_{i+1}}&X_{i+1}\arrow{r} &\cdots \\
\end{tikzcd}
\]
For which $im(\delta_i)\subseteq ker(\delta_{i+1}).$ If $im(\delta_i)=ker(\delta_{i+1})$ for all $i$ then the complex 
is called an \textbf{exact sequence}. The following properties hold for exact sequences in an abelian category.
\begin{theo}[Five lemma]
Consider the commutative diagram below with exact rows.
\[
\begin{tikzcd}
X_0\arrow{d}{f_0}\arrow{r}&X_1\arrow{d}{f_1} \arrow{r}&X_2\arrow{d}{f_2}\arrow{r}&X_3\arrow{d}{f_3}\arrow{r}&X_4\arrow{d}{f_4} \\
Y_0\arrow{r}&Y_1 \arrow{r}&Y_2\arrow{r}&Y_3\arrow{r}&Y_4\\
\end{tikzcd}
\]
\begin{enumerate}[a.]
\item If $f_1$ and $f_3$ are monomorphisms and $f_0$ is an epimorphism, then $f_2$ is a monomorphism.
\item If $f_1$ and $f_3$ are epimorphisms and $f_4$ is a monomorphism, then $f_2$ is an epimorphism.
\end{enumerate}
\end{theo}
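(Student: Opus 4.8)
The plan is to establish part (a) by a classical diagram chase and then obtain part (b) by dualizing. Since the statement is set in an abstract abelian category rather than in \textbf{R-Mod}, I would first make clear that element-style reasoning is legitimate here: one may either invoke the Freyd--Mitchell embedding theorem to reduce to a category of modules, or argue throughout with \emph{members} (generalized elements, i.e.\ equivalence classes of morphisms into an object, with the usual notions of ``zero member'' and ``member lifting along an epimorphism''), which is the standard device for proving diagram lemmas directly from the axioms listed above. With either framework fixed, the proof is a finite bookkeeping exercise that uses only exactness of the two rows and commutativity of the squares.

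For part (a), take a member $x_2$ of $X_2$ with $f_2(x_2)=0$; the goal is $x_2=0$. Push $x_2$ into $X_3$ and use commutativity of the third square: its image in $Y_3$ equals $f_3$ of its image in $X_3$, which is $0$, and $f_3$ monic forces the image of $x_2$ in $X_3$ to vanish. By exactness of the top row at $X_2$, $x_2$ is the image of some member $x_1$ of $X_1$. Commutativity of the second square shows $f_1(x_1)$ maps to $f_2(x_2)=0$ in $Y_2$, so by exactness of the bottom row at $Y_1$ we have $f_1(x_1)=\delta(y_0)$ for some member $y_0$ of $Y_0$; surjectivity of $f_0$ lets us write $y_0=f_0(x_0)$, and commutativity of the first square gives $f_1(\delta(x_0))=\delta(f_0(x_0))=f_1(x_1)$, whence $\delta(x_0)=x_1$ because $f_1$ is monic. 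Finally $x_2$ is the image of $x_0$ under two consecutive maps of the top row, so $x_2=0$ since a row is in particular a complex; thus $f_2$ is a monomorphism. (Note this argument uses $f_0$ epic and $f_1,f_3$ monic, and never touches $f_4$, matching the hypotheses of (a).)

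For part (b) I would pass to the opposite category $\mathcal{C}^{op}$, where the diagram becomes one of the same shape with the horizontal arrows reversed and the columns relabelled $4,3,2,1,0$; monomorphisms and epimorphisms swap, and exact rows remain exact. Under this dictionary the hypotheses of (b) — $f_1,f_3$ epic and $f_4$ monic — are precisely the hypotheses of (a) in $\mathcal{C}^{op}$, and its conclusion there, ``$f_2$ monic,'' reads back as ``$f_2$ epic.'' So (b) follows formally from (a); alternatively one runs the mirror chase, starting from an arbitrary member $y_2$ of $Y_2$ and building a preimage in $X_2$. The step I expect to demand the most care is not any individual arrow-pushing move but the justification that the chase is valid in the first place: either stating the members formalism precisely enough that ``$x_2$ lies in the kernel, hence lifts to $X_1$'' is a genuine consequence of the abelian-category axioms (exactness, every monomorphism a kernel, every epimorphism a cokernel, epi--mono factorization), or citing the embedding theorem with enough care that the reduction to modules is airtight.
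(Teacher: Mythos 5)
Your proof is correct. Note, though, that the paper itself offers no proof of this statement: the Five Lemma is quoted in Section 2 purely as background on abelian categories (following the cited references), so there is no argument of the paper's to compare yours against. Your part (a) is the standard four-lemma chase and uses exactly the stated hypotheses ($f_1,f_3$ monic, $f_0$ epic, exactness of the top row at $X_2$, of the bottom row at $Y_1$, and the complex condition at $X_1$), and your part (b) by passing to $\mathcal{C}^{op}$ is legitimate since exactness is self-dual and monomorphisms and epimorphisms interchange. Your flagged worry about element-style reasoning is the right one, and it is handled adequately: either the Freyd--Mitchell embedding or Mac Lane's calculus of members suffices, and the one step that might look delicate in the members formalism --- cancelling $f_1$ in $f_1(\delta(x_0))\equiv f_1(x_1)$ --- is in fact fine, because a monomorphism cancels on equivalence classes of members directly ($f x u = f y v$ forces $x u = y v$). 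So the argument is complete as proposed; it simply supplies a proof the paper chose to omit.
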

\begin{theo}[Snake lemma]
Consider the diagram below whose rows are short exact sequences.
\[
\begin{tikzcd}
0\arrow{r}&X_1\arrow{d}{f_1} \arrow{r}&X_2\arrow{d}{f_2}\arrow{r}&X_3\arrow{d}{f_3}\arrow{r}&0\\
0\arrow{r}&Y_1 \arrow{r}&Y_2\arrow{r}&Y_3\arrow{r}&0\\
\end{tikzcd}
\] 
Then there exists a canonical homomorphism
$\delta:ker(f_3)\mapsto coker(f_1)$ called the connecting homomorphism, such that
\[
\begin{tikzpicture}[baseline= (a).base]
\node[scale=.8] (a) at (0,0){
\begin{tikzcd}
0\arrow{r}&ker(f_1)\arrow{r}&ker(f_2)\arrow{r}&ker(f_3)\arrow{r}{\delta}&coker(f_1)\arrow{r}&coker(f_2)\arrow{r}&coker(f_3)\arrow{r}&0\\
\end{tikzcd}};
\end{tikzpicture}
\]
is exact, where all the maps other than $\delta$ are the obvious ones induced by the diagram.
\end{theo}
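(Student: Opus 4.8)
The plan is a diagram chase. Since the statement concerns an arbitrary abelian category rather than a category of modules, I would first reduce to the module case: by the Freyd--Mitchell embedding theorem a small abelian subcategory containing the given diagram embeds fully, faithfully and exactly into a category of modules over a suitable ring $R$, and such an embedding preserves exactness, kernels and cokernels, so it suffices to prove the assertion for modules, where honest elements are available. (Alternatively, one can run the same argument intrinsically using generalized elements, i.e. ``members'', of the objects.) Write the two rows as $0\to X_1\xrightarrow{i}X_2\xrightarrow{p}X_3\to 0$ and $0\to Y_1\xrightarrow{j}Y_2\xrightarrow{q}Y_3\to 0$.

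Next I would construct $\delta$. Given $x_3\in ker(f_3)$, use surjectivity of $p$ to choose $x_2\in X_2$ with $p(x_2)=x_3$. Then $q(f_2(x_2))=f_3(p(x_2))=f_3(x_3)=0$, so $f_2(x_2)\in ker(q)=im(j)$, and since $j$ is injective there is a unique $y_1\in Y_1$ with $j(y_1)=f_2(x_2)$; set $\delta(x_3):=\overline{y_1}\in coker(f_1)$. To see that this is well defined, let $x_2'$ be another preimage of $x_3$; then $x_2-x_2'\in ker(p)=im(i)$, say $x_2-x_2'=i(x_1)$, and applying $f_2$ together with the relation $j\circ f_1=f_2\circ i$ shows that the two candidate elements of $Y_1$ differ by $f_1(x_1)$, hence have the same class in $coker(f_1)$. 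Additivity of $\delta$ is immediate from the construction, so $\delta$ is a well-defined homomorphism.

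It then remains to check exactness at each of the six terms. Exactness at $ker(f_1)$ is just the injectivity of $i$, and, dually, exactness at $coker(f_3)$ is the surjectivity of $q$ passed to cokernels. Exactness at $ker(f_2)$ and at $coker(f_2)$ are the routine chases using only exactness of the rows and the commutativity relations $f_2\circ i=j\circ f_1$ and $f_3\circ p=q\circ f_2$. The two steps that genuinely involve $\delta$ are exactness at $ker(f_3)$, i.e. $im\big(ker(f_2)\to ker(f_3)\big)=ker(\delta)$, and at $coker(f_1)$, i.e. $im(\delta)=ker\big(coker(f_1)\to coker(f_2)\big)$; each is obtained by unwinding the definition of $\delta$ and keeping track of the preimages chosen, again using exactness of the two rows together with injectivity of $j$ and surjectivity of $p$. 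Finally I would observe that every choice above is natural, so a morphism of short exact sequences induces a morphism of the associated six-term sequences; this is the sense in which $\delta$ is canonical.

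The main obstacle I anticipate is the bookkeeping in the two exactness verifications adjacent to $\delta$: in each case one must exhibit an explicit preimage and then argue that it lies in the correct subobject, and it is easy to confuse the auxiliary choices of the chase with the choices entering the definition of $\delta$. The other point requiring care is making the reduction to modules fully rigorous --- or, if one prefers to stay intrinsic, checking that each step of the chase is legitimate for members in an arbitrary abelian category.
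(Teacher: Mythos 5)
The paper does not prove this statement at all: the Snake Lemma appears in its background section on category theory as a standard result, quoted without proof, so there is no argument of the author's to compare yours against. Your sketch is the classical one and is correct in outline: the reduction to modules via a Freyd--Mitchell embedding of a small abelian subcategory containing the diagram (or, intrinsically, a chase with generalized elements) is legitimate, the construction of $\delta$ and its well-definedness are handled properly, and you correctly isolate the two nontrivial exactness checks, at $ker(f_3)$ and at $coker(f_1)$, as the places where the definition of $\delta$ must be unwound. Do note that those two verifications are only announced, not carried out, so as written this is a proof sketch rather than a complete proof; filling them in is routine but is exactly the bookkeeping you flag, and a full write-up should also record the naturality claim (a morphism of short exact sequences induces a morphism of six-term sequences) as a separate, easily checked diagram chase rather than an afterthought.
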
 
Let $F:\mathcal{C}_1 \longrightarrow \mathcal{C}_2$ be a covariant functor between two abelian categories 
$\mathcal{C}_1$ and $\mathcal{C}_2.$ The functor $F$ is called \textbf{additive} if it commutes with addition of morphisms. 
$Hom$ and the tensor product 
are examples of additive functors.
An additive functor sends complexes to complexes, but does not generally send exact sequences to exact sequences.
\begin{definition}
Let $F$ be a functor;
\begin{enumerate}[a.]
\item $F$ is \textbf{left exact} if for a given exact sequence $0 \longrightarrow X_1\longrightarrow  X_2\longrightarrow X_3$ 
the sequence \\ 
$0 \longrightarrow F(X_1)\longrightarrow  F(X_2)\longrightarrow F(X_3)$ is exact.
\item $F$ is \textbf{right exact} if for a given exact sequence $\longrightarrow X_1\longrightarrow  X_2\longrightarrow X_3 \longrightarrow 0$ the sequence  $ \longrightarrow F(X_1)\longrightarrow  F(X_2)\longrightarrow F(X_3)\longrightarrow 0$ 
is exact.
\item $F$ is \textbf{exact} if it is both left exact and right exact.
\end{enumerate}
\end{definition}  
The $Hom$ functor is left exact while the tensor product functor 
is right exact. 
We now give the definition of cohomological functors. 
\begin{definition}[Cohomological functor]
A cohomological functor (or $\delta$-functor ) between abelian categories $\mathcal{C}_1$ and $\mathcal{C}_2$ is a sequence of functors 
$$T^i : \mathcal{C}_1 \mapsto \mathcal{C}_2\  i= 0,1,\ldots$$
plus for each short exact sequence 
$$0 \longrightarrow X\longrightarrow Y\longrightarrow Z\longrightarrow 0$$
in $\mathcal{C}_1$ a morphism $\delta^i:T^i (Z)\mapsto T^{i+1}(X)$ functorial in the sequence, such that 
the sequence
\[
\begin{tikzpicture}[baseline= (a).base]
\node[scale=.8] (a) at (0,0){
\begin{tikzcd}
0\arrow{r}&T^0(X) \arrow{r}& T^0(Y) \arrow{r}& T^0(Z) \arrow{r}{\delta^0}&T^1(X) \arrow{r}& T^1(Y) \arrow{r}& T^1(Z) \arrow{r}{\delta^1}&T^2(X) \arrow{r}&\cdots\\
\end{tikzcd}};
\end{tikzpicture}
\]
is exact.
\end{definition}
A cohomological functor $T$ is said to be \textbf{universal} if given any other cohomological functor $U$ and a 
natural transformation $f^0:T^0\mapsto U^0,$ there is a unique sequence of natural transformations 
$f^i:T^i\mapsto U^i$ starting with $f^0$ which commute with the $\delta_i$. Given $T^0,$ any two 
extensions of it to a universal cohomological functor are naturally isomorphic.
\par Let $I$ be an object in in an abelian category $\mathcal{C}.$ Then $I$ is \textbf{injective} if the functor\\ 
$Hom(-,I):\mathcal{C}^{op}\mapsto \textbf{Ab}$ is exact. Since the $Hom$ functor is left exact it 
suffices to show that if $0\longrightarrow X \longrightarrow Y$ is a monomorphism, then for any 
morphism $X\mapsto I$ we can find some morphism $Y\longrightarrow I$ so that the diagram below commute.
\[
\begin{tikzcd}
0\arrow{r}&X\arrow{rd}\arrow{r}&Y\arrow[d, dashrightarrow]\\
&&I
\end{tikzcd}
\]
If for every object $X$ in category $\mathcal{C}$ there exists a monomorphism $X\mapsto I,$ where 
$I$ is an injective element, then we say that the category $\mathcal{C}$ has enough injectives. 
For an abelian category with enough injectives, any universal cohomology functor 
can be computed using injective resolutions. Thus it is possible to define the right derived functor 
of $F$ as follows; for any object $X,$ if $I^{\ast}$ is an injective resolution of $X,$ 
set $R^iF(X)=H^i(F(I^{\ast})).$
\section{Sheaves}
\label{sheavesandvect} 
\begin{definition}[Presheaf of abelian groups]
Let $X$ be a topological space.  A presheaf  $\mathcal{F}$ of abelian groups on $X$ is an assignment of 
each open set $U\subset X$ an abelian group $\mathcal{F}(U)$, and to every inclusion $V\subset U$ of open 
subsets of $X$ a morphism of abelian groups $r^U_V: \mathcal{F}(U)\longrightarrow  \mathcal{F}(V)$ called 
the restriction of $U$ to $V$ subject to the following conditions. 
\begin{enumerate}[a.]
\item $\mathcal{F}(U)=0\ \Leftrightarrow \ U=\varnothing,$ the empty set, 
\item  $r^U_U$ is the identity map $\mathcal{F}(U)\longrightarrow  \mathcal{F}(U)$  and 
\item  for any three open sets $U,\ V,$ and $W$ such that $W\subset V\subset U$, $r^U_W =r^V_W\circ r^U_V $
\end{enumerate}
\end{definition}
In other words, a presheaf $\mathcal{F}$ of abelian groups is a contravariant functor 
$\mathcal{F}:\textbf{Top}(X)\mapsto \textbf{Ab},$ where $\textbf{Top}(X)$ is a category whose objects are 
open sets and morphisms inclusion maps in which $Hom(U,V)=\varnothing$ if $V$ is not a subset of $U$ and 
$Hom(U,V)$ has only one element if $V$ is a subset of $U$. The category $\textbf{Ab}$ in the definition of 
presheaves of abelian groups can be replaced by any category $\mathcal{C}$ to obtain a presheaf with values 
in the fixed category $\mathcal{C}.$ We refer to $\mathcal{F}(U)$ as the sections of the presheaf 
$\mathcal{F}$ over the open set $U$. 
\newline If in addition the presheaf in the definition above satisfies the following conditions: 
\begin{enumerate}[i.]
\item  for any open set $U$, if ${V_i}$ is an open cover for $U$ and if $s\in \mathcal{F}(U)$  
is an element such that $r^U_{V_i}(s)=0$ for all $i$, then $s = 0$; 
\item for an open set $U$, if ${V_ i}$ is an open cover for $U$, and if we have elements 
$s_i \in \mathcal{F}(V_i)$ for each $i$, with the property that for each 
$i,j,\ r^{U}_{V_i\cap V_j}(s_i)= r^{U}_{V_i\cap V_j}(s_j)$ then there is an element $s\in \mathcal{F}(U)$  such that $r^{U}_{V_i}(s)=s_i$  for each $i$. 
\end{enumerate}
Then $\mathcal{F}$ is called a \textbf{sheaf}.
 \begin{definition}
Let $\mathcal{F}$ and $\mathcal{G}$  be presheaves on $X.$ A morphism 
$\phi : \mathcal{F}\longrightarrow \mathcal{G}$  consists of a morphism of abelian groups 
$\phi (U):\mathcal{F}(U)\mapsto \mathcal{G}(U)$ for each open set $U$  such that 
whenever $V\subset U$ is an inclusion, the diagram 
\[
\begin{tikzcd}
\mathcal{F}(U)\arrow{r}{\phi(U)}\arrow{d}{r^U_V}&\mathcal{G}(U)\arrow{d}{r'^U_V}\\
\mathcal{F}(V)\arrow{r}{\phi(U)}&\mathcal{G}(V)
\end{tikzcd}
\]
commute. The presheaf kernel of $\phi$, presheaf cokernel of $\phi$, and presheaf image of $\phi$ 
to be the presheaves given by $U\mapsto ker(\phi(U)),\ U\mapsto coker(\phi(U))$ 
and $U\mapsto im(\phi(U))$ respectively. 
\end{definition}
\begin{remark}
If $\phi$ is a morphism of sheaves, then the presheaf kernel of $ \phi$ is a sheaf. However, the 
presheaf cokernel and presheaf image of $\phi$ are not necessarily sheaves.
\end{remark}
Let $X = Spec R$ the prime spectrum of 
$R=k[x_0,\ldots x_n]$ endowed with the zariski topology. In this topology open sets are the distinguished 
open sets $D(f)$ for $f\in R$, where $D(F)$  is the set of all functions in $R$ that do not vanish outside 
the vanishing set of $f$. We can then define a ring $\mathcal{O}_X(U)$ for every open set $U\subset X.$ 
We call $\mathcal{O}_X(U)$ the ring of regular function in the neighbourhood of $U$ and the assignment to 
every open set $U$ the ring $\mathcal{O}_X(U)$ the structure sheaf of $X$ denoted by $\mathcal{O}_X$.
The pair $(X,\mathcal{O}_X)$ is a ringed space. 
Cohomology of sheaves can be defined by taking the derived functors of the global section functor. 
This is possible because  for any ring $R$ every $R$-module is isomorphic to a submodule of some 
injective $R$-module.
\par Let $(X,\mathcal{O}_X)$ be a ringed space and consider the category $\textbf{Mod}(X)$ of 
sheaves of $\mathcal{O}_X $-modules. This category has enough injectives. Consequently then the 
category $\textbf{Ab}(X)$ of sheaves of abelian groups on $X$ has enough injectives.
\begin{definition}
Suppose $X$ is a topological space. Denote by $\Gamma (X,\_)$  the the global section functor 
from $\textbf{Ab}(X)$ to $\textbf{Ab}$. We can then define the cohomology functor 
$\HH^i(X,\_)$ as the right derived functors of $\Gamma (X,\_).$ For any sheaf $\mathcal{F}$, 
the groups $\HH^i(X,\mathcal{F})$ are referred to as the cohomology groups of $\mathcal{F}$.
\end{definition}
For a given short exact sequence
$$0\longrightarrow \F_1\longrightarrow \F_2\longrightarrow \F_3\longrightarrow 0,$$
there is a long exact sequence induced by the cohomology functor given by;
\[
\begin{tikzpicture}[baseline= (a).base]
\node[scale=.7] (a) at (0,0){
\begin{tikzcd}
0\arrow{r}&\HH^0(X,\F_1)\arrow{r}&\HH^0(X,\F_2)\arrow{r}&\HH^0(X,\F_3)\arrow{r}&\HH^1(X,\F_1)\arrow{r}&\HH^1(X,\F_2)\arrow{r}&\HH^1(X,\F_3)\arrow{r}&\cdots
\end{tikzcd}};
\end{tikzpicture}
\]
If $X$ is a noetherian topological space of dimension $n,$ then $\HH^i(X,\mathcal{F})=0$ for all $i >n$ 
and all sheaves of abelian groups $\mathcal{F}$ on $X.$ 
\par The dimension of the cohomology group $\HH^i(X,\mathcal{F})$ is denoted by $\hh^i(X,\mathcal{F}).$
If $X=\PP^n$ and $\mathcal{F}=\Omega_{\PP^n}^{p}(d)$ is the sheaf of $p$-forms of degree $d$, 
then the dimension of the cohomology group $\HH^i(\PP^n,\Omega_{\PP^n}^{p}(d))$ is given by the Botts formula.
\begin{theo}[ Botts formula \cite{cmh}]
\label{bott}
\abovedisplayskip=0pt\relax
\[
\hh^i(\PP^n,\Omega_{\PP^n}^{p}(d)) =
\begin{cases}
\binom{d+n-p}{d}\binom{d-1}{p} &\text{for}\ i=0,\ 0\leq p\leq n,\ d>p\\
1& \text{for}\ d=0, 0\leq p=i\leq n\\
\binom{-d-p}{-d}\binom{-d-1}{n-p}& \text{for}\ i=n\ 0\leq p\leq n,\ d<p-n\\
0&\text{otherwise}
\end{cases}
\]
in particular if $p=0,$ we have
\[
\hh^i(\PP^n,\mathcal{O}_{\PP^n}(d)) =
\begin{cases}
\binom{d+n}{d} &\text{for}\ i=0,\ d\geq 0\\
\binom{-d-1}{-d-1-n}& \text{for}\ d\leq -n-1\\
0&\text{otherwise}
\end{cases}
\]
\end{theo}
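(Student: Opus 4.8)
The plan is to establish first the case $p = 0$ directly by \v{C}ech cohomology, and then to bootstrap up to arbitrary $p$ by means of the Euler sequence together with an induction on $p$.

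For line bundles I would cover $\PP^n$ by the standard affine opens $U_i = \{x_i \neq 0\}$, $0 \le i \le n$. Since every finite intersection of the $U_i$ is affine and $\OO_{\PP^n}(d)$ is quasi-coherent, this cover computes cohomology, so it suffices to evaluate the \v{C}ech complex. Assembling these complexes over all $d$ at once, they are built out of the graded pieces of $R = k[x_0,\dots,x_n]$ and of its localizations $R_{x_{i_0}\cdots x_{i_k}}$; a direct inspection gives $\HH^0 \cong R$, $\HH^i = 0$ for $0 < i < n$, and $\HH^n$ with basis the monomials $x_0^{a_0}\cdots x_n^{a_n}$ all of whose exponents satisfy $a_j \le -1$. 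Counting the degree-$d$ monomials of this last type (set $b_j = -a_j - 1 \ge 0$ and count the solutions of $\sum_j b_j = -d-n-1$) produces $\binom{-d-1}{-d-1-n}$, nonzero precisely when $d \le -n-1$; in particular $\hh^0(\OO_{\PP^n}(d)) = \binom{d+n}{d}$ for $d \ge 0$. This is the second displayed formula.

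For arbitrary $p$ I would take the $p$-th exterior power of the dual Euler sequence. Dualizing the Euler sequence gives $0 \to \Omega^1_{\PP^n} \to \OO_{\PP^n}(-1)^{\oplus(n+1)} \to \OO_{\PP^n} \to 0$, and since the quotient is a line bundle, taking $p$-th exterior powers collapses to the short exact sequence
\[
0 \longrightarrow \Omega^p_{\PP^n}(d) \longrightarrow \OO_{\PP^n}(d-p)^{\oplus\binom{n+1}{p}} \longrightarrow \Omega^{p-1}_{\PP^n}(d) \longrightarrow 0
\]
after twisting by $\OO_{\PP^n}(d)$. Its long exact cohomology sequence expresses each $\HH^i(\PP^n,\Omega^p_{\PP^n}(d))$ in terms of the already-known $\HH^\bullet(\PP^n,\OO_{\PP^n}(d-p))$ and of $\HH^\bullet(\PP^n,\Omega^{p-1}_{\PP^n}(d))$, so induction on $p$ (with $n$ fixed, carrying the full statement as hypothesis) applies. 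For most values of $d$ the two bundles on the outside have their nonzero cohomology in non-adjacent degrees, so the long exact sequence at once pins down $\HH^i(\PP^n,\Omega^p_{\PP^n}(d))$; the borderline values $d = p$, $d = 0$ and $d = p - n$ need individual attention, and it is at $d = 0$ that the diagonal value $\hh^p(\PP^n,\Omega^p_{\PP^n}) = 1$ emerges. Alternatively, the top case $i = n$ can be deduced from the case $i = 0$ by Serre duality: from $(\Omega^p_{\PP^n})^\vee \cong \Omega^{n-p}_{\PP^n}(n+1)$ and $\omega_{\PP^n} \cong \OO_{\PP^n}(-n-1)$ one gets $\HH^n(\PP^n,\Omega^p_{\PP^n}(d)) \cong \HH^0(\PP^n,\Omega^{n-p}_{\PP^n}(-d))^\ast$.

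The main obstacle is the combinatorial bookkeeping, not the homological algebra. At each inductive step one must check that the maps induced on cohomology by the short exact sequence above have the expected rank — injective or surjective as the situation demands, so that for every pair $(p,d)$ at most one cohomology group survives — and then verify that the alternating sums of binomial coefficients produced by the long exact sequence collapse to the closed forms in the statement. Concretely the range $i = 0$, $d \ge p$, reduces to a Chu--Vandermonde-type identity relating $\binom{n+1}{p}\binom{d-p+n}{n}$, $\binom{d+n-p+1}{d}\binom{d-1}{p-1}$ and $\binom{d+n-p}{d}\binom{d-1}{p}$, with an analogous identity governing the top degree. Organizing the case analysis on the signs of $d$, $d - p$ and $d - p + n$ cleanly, and checking that the strict inequalities $d > p$ and $d < p - n$ in the statement are exactly those forced by the boundary cancellations, is the delicate part.
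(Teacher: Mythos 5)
There is nothing in the paper to compare your argument against: the paper does not prove Bott's formula but simply quotes it from \cite{cmh}, so by supplying an argument you are already doing more than the text does. Your outline is the standard proof and is sound as a plan: the \v{C}ech computation of $\HH^\bullet(\PP^n,\OO_{\PP^n}(d))$ on the cover $\{x_i\neq 0\}$ is correct (including the count $\binom{-d-1}{-d-1-n}$ for $i=n$), the exact sequence $0\to\Omega^p_{\PP^n}(d)\to\OO_{\PP^n}(d-p)^{\oplus\binom{n+1}{p}}\to\Omega^{p-1}_{\PP^n}(d)\to 0$ obtained from the Euler sequence is the right vehicle for the induction on $p$ (for $d=0$ and $1\le p\le n$ the middle term has no cohomology at all, so $\HH^{i}(\Omega^{p-1}_{\PP^n})\cong \HH^{i+1}(\Omega^{p}_{\PP^n})$ and the diagonal $1$'s fall out at once), and the Serre-duality reduction of the case $i=n$ to $i=0$ via $(\Omega^p_{\PP^n})^\vee\cong\Omega^{n-p}_{\PP^n}(n+1)$, $\omega_{\PP^n}\cong\OO_{\PP^n}(-n-1)$ is legitimate. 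What you flag but do not carry out is where the real content lies: for $d\ge p$ the long exact sequence determines $\hh^0(\Omega^p_{\PP^n}(d))$ and kills $\hh^1$ only after one proves that $\HH^0(\OO_{\PP^n}(d-p))^{\oplus\binom{n+1}{p}}\to \HH^0(\Omega^{p-1}_{\PP^n}(d))$ is surjective (and, at $d=p$, that it is injective), so a complete write-up must establish these ranks (e.g.\ by exhibiting the sections of $\Omega^{p-1}_{\PP^n}(d)$ as images of the Koszul/Euler maps) rather than treat them as bookkeeping; the Chu--Vandermonde verification is then routine. One dividend of your duality remark is worth recording: Serre duality gives $\hh^n(\Omega^p_{\PP^n}(d))=\hh^0(\Omega^{n-p}_{\PP^n}(-d))=\binom{-d+p}{-d}\binom{-d-1}{n-p}$ for $d<p-n$, which shows that the binomial $\binom{-d-p}{-d}$ in the third case of the statement as printed is a typo for $\binom{-d+p}{-d}$ (this agrees with \cite{cmh}); your route reproduces the correct value.
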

Let $X=Spec{R}$ and $\mathcal{O}_X$ be the structure sheaf of $X.$ A sheaf of $\mathcal{O}_X$-modules 
is a sheaf $\mathcal{F}$ on $X$ such that for each open set $U\subset X$, the group $\mathcal{F}(U)$ 
is an $\mathcal{O}_X(U)$-module. An $\mathcal{O}_X$-module which is isomorphic to a direct sum of copies 
of $\mathcal{O}_X$ is called a free $\mathcal{O}_X(U)$-module. If the open sets $U$ such that 
$\mathcal{F}|_U$ is an  $\mathcal{O}_X(U)$-module forms an open cover for the topological space $X$ 
then $\mathcal{F}$ is a locally free sheaf. The rank of $\mathcal{F}$ on is the number of copies of the 
structure sheaf needed whether finite or infinite. For a connected topological space $X$, 
the rank of a locally free sheaf is the same everywhere. A locally free sheaf of rank 1 is also called an 
invertible sheaf.
\begin{definition}
Let $X$ be a variety, a vector bundle over $X$ is a variety $E$ with a map $\pi:E\to X$ such that the following 
conditions hold:
\begin{enumerate}[a.]
\item For each $p\in X,$ we have $\pi^{-1}(p)$ is isomorphic to $\mathbb{A}^n$ for some n.
\item There exists a cover $U_i$ of $X$ such that $\pi^{-1}(U_i)$ is isomorphic to $U_i\times \mathbb{A}^n.$
\end{enumerate}
\end{definition}
Given any locally free sheaves, one can define a vector bundle and conversely. By viewing locally free sheaves as 
vector bundles we can define elementary transformations.
\subsection*{Elementary transformation of Vector bundles \cite{mm}}
Let $\F$ be a vector bundle on $X.$ If we define a surjective map $\psi:\F\longrightarrow \F',$ where 
$\F'$ is a vector bundle on a divisor $X'$ of $X,$ then $\E=ker(\psi)$ is a vector bundle on X. The 
procedure of obtaining $\E$ from $\F$ is called the elementary transformation of $\F$ along $\F'$ and 
is denoted by $\E=elm_{\F'}(\F).$ We call $\E$ the elementary transform of $\F$ along $\F.$ For 
the given $\psi:\F\longrightarrow \F'$, we have the following exact, commutative diagram which is 
called the display of the elementary transformation:
\[
\begin{tikzcd}
&0\arrow{d}&0\arrow{d}&&\\
&\F(-X)\arrow{d} \arrow[equals]{r}&\F(-X)\arrow{d}&&\\
0\arrow{r}
&\E\arrow{r}\arrow{d}{\psi '}&\F\arrow{r}{\psi }\arrow{d}&\F'\arrow{r}\arrow[equals]{d}&0\\
0\arrow{r}
&\F''\arrow{r}\arrow{d}&\F|_{X'}\arrow{r}\arrow{d}&\F'\arrow{r}
&0\\
&0&0&&
\end{tikzcd}
\]
where $\F''$ is the kernel of $\psi|_{X'}$ and $\F(-X) = F\otimes_{\mathcal{O}_x}\mathcal{O}_X(-X)$. The 
leftmost vertical exact sequence gives us the inverse of the given transformation, that is, 
$\F(-X) = elm_{F''}(E).$
\begin{remark}
There is an equivalence of categories between the category of algebraic vector bundles and the category of 
locally free sheaves, given by associating to an algebraic vector bundle $F\mapsto X$, the sum $\F$ of sections 
of $F.$ 
\end{remark}
\section{Main}
\label{main}
In this section, we prove our main result, that is, we use the equivalence in category between the category 
of locally free sheaves and the category of algebraic vector bundles to give an elementary transformation 
in $\PP^n$. Now that $\OO_{X}$-modules form an abelian group, but locally free sheaves along with reasonably natural maps 
between them (those arising as maps of $\OO_X$-modules) do not form an abelian category, we will enlarge 
our notion of nice $\OO_X$-modules to quasi-coherent sheaves. In fact our locally free sheaves have finite rank, 
and so we will talk about coherent sheaves. We will conclude this section by describing how the elementary 
transformation in $\PP^n$ can be used to prove the minimal resolution conjecture, especially when the points 
under consideration are in general position.
\begin{theo}
\label{mainprop}
There exist an elementary transformation of vector bundles on $\PP^n$ comprising of the following exact sequences.
\begin{equation}
\label{etgp}
\begin{tikzcd}
&0\arrow{d}&0\arrow{d}&&\\
&\Omega^{p+1}_{\PP^n}(p+1)\arrow{d} \arrow[equals]{r}
&\Omega^{p+1}_{\PP^n}(p+1)\arrow{d}&&\\
0\arrow{r}
&\mathcal{O}_{\PP^n}^{\oplus \binom{n}{p+1}}\arrow{r}\arrow{d}
&\Omega^{p+1}_{\PP^n}(p+2)\arrow{r}\arrow{d}
&\Omega^{p+1}_{\PP^{n-1}}(p+2)\arrow{r}\arrow[equals]{d}
&0\\
0\arrow{r}
&\Omega^{p}_{\PP^{n-1}}(p+1)\arrow{r}\arrow{d}
&\Omega^{p+1}_{\PP^n|\PP^{n-1}}(p+2)\arrow{r}\arrow{d}
&\Omega^{p+1}_{\PP^{n-1}}(p+2)\arrow{r}
&0\\
&0&0&&
\end{tikzcd}
\end{equation}
\end{theo}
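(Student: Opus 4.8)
The plan is to identify the diagram (\ref{etgp}) with the standard display of the elementary transformation $\mathrm{elm}_{\Omega^{p+1}_{\PP^{n-1}}(p+2)}\bigl(\Omega^{p+1}_{\PP^n}(p+2)\bigr)$ along a hyperplane $\PP^{n-1}\subset\PP^n$. Fix such a hyperplane and write $\mathcal{F}=\Omega^{p+1}_{\PP^n}(p+2)$, so that $\mathcal{F}\otimes\mathcal{O}_{\PP^n}(-\PP^{n-1})=\Omega^{p+1}_{\PP^n}(p+1)$, which will be the top row. I would build the display from two classical exact sequences: the restriction sequence $0\to\Omega^{p+1}_{\PP^n}(p+1)\to\mathcal{F}\xrightarrow{r}\mathcal{F}|_{\PP^{n-1}}\to 0$, obtained by tensoring $0\to\mathcal{O}_{\PP^n}(-1)\to\mathcal{O}_{\PP^n}\to\mathcal{O}_{\PP^{n-1}}\to 0$ with the locally free sheaf $\mathcal{F}$; and the $(p+1)$-st exterior power of the conormal sequence of $\PP^{n-1}$, twisted by $\mathcal{O}(p+2)$, namely $0\to\Omega^{p}_{\PP^{n-1}}(p+1)\to\mathcal{F}|_{\PP^{n-1}}\xrightarrow{q}\Omega^{p+1}_{\PP^{n-1}}(p+2)\to 0$ (using $N^{\vee}_{\PP^{n-1}/\PP^n}=\mathcal{O}_{\PP^{n-1}}(-1)$). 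Setting $\psi=q\circ r\colon\mathcal{F}\to\Omega^{p+1}_{\PP^{n-1}}(p+2)$, a surjection onto a bundle on the divisor $\PP^{n-1}$, the kernel $\mathcal{E}:=\ker\psi$ is a vector bundle on $\PP^n$ of rank $\binom{n}{p+1}$ (the rank of $\mathcal{F}$, since $\Omega^{p+1}_{\PP^{n-1}}(p+2)$ is generically zero). Since $\psi$ factors through $r$, we have $\mathcal{E}=r^{-1}(\ker q)$, which gives the left column $0\to\Omega^{p+1}_{\PP^n}(p+1)\to\mathcal{E}\to\Omega^{p}_{\PP^{n-1}}(p+1)\to 0$; together with the restriction sequence (middle column), the conormal sequence (bottom row), the defining sequence $0\to\mathcal{E}\to\mathcal{F}\to\Omega^{p+1}_{\PP^{n-1}}(p+2)\to 0$ (middle row), and the two obvious identities, this assembles into a commutative diagram with exact rows and columns of exactly the shape of (\ref{etgp})---except that $\mathcal{E}$ is not yet identified with $\mathcal{O}_{\PP^n}^{\oplus\binom{n}{p+1}}$.

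So the crux is to prove $\mathcal{E}\cong\mathcal{O}_{\PP^n}^{\oplus\binom{n}{p+1}}$, and I would do this in two stages. First, twisting the left column by $\mathcal{O}_{\PP^n}(t)$, using the projection formula $\Omega^{p}_{\PP^{n-1}}(p+1)\otimes\mathcal{O}_{\PP^n}(t)=\Omega^{p}_{\PP^{n-1}}(p+1+t)$, and invoking Bott's formula (Theorem \ref{bott})---which shows $H^{\ge 1}\bigl(\PP^n,\Omega^{p+1}_{\PP^n}(p+1+t)\bigr)=0$ for all $t\ge 0$ because the twist $p+1$ falls in the vanishing regime---one gets $h^0(\PP^n,\mathcal{E}(t))=h^0(\PP^n,\Omega^{p+1}_{\PP^n}(p+1+t))+h^0(\PP^{n-1},\Omega^{p}_{\PP^{n-1}}(p+1+t))$, and Bott's formula together with Pascal's identity $\binom{p+t}{p}+\binom{p+t}{p+1}=\binom{p+t+1}{p+1}$ collapses this to $\binom{n}{p+1}\binom{n+t}{n}=h^0\bigl(\PP^n,\mathcal{O}_{\PP^n}(t)^{\oplus\binom{n}{p+1}}\bigr)$. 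Second, I would exhibit an explicit injection $\phi\colon\mathcal{O}_{\PP^n}^{\oplus\binom{n}{p+1}}\hookrightarrow\mathcal{E}$. Starting from the Euler--Koszul presentation $0\to\Omega^{p+2}_{\PP^n}(p+2)\to\textstyle\bigwedge^{p+2}W\otimes\mathcal{O}_{\PP^n}\xrightarrow{e}\mathcal{F}\to 0$ with $W=H^0(\PP^n,\mathcal{O}_{\PP^n}(1))$ (the $(p+2)$-nd exterior power of the twisted Euler sequence), the restriction $W\to H^0(\PP^{n-1},\mathcal{O}(1))$ is compatible with $e$ and with the analogous presentation of $\Omega^{p+1}_{\PP^{n-1}}(p+2)$, so $e$ carries the rank-$\binom{n}{p+1}$ free subsheaf $\bigl(\ell\wedge\textstyle\bigwedge^{p+1}W'\bigr)\otimes\mathcal{O}_{\PP^n}$---with $\ell$ an equation of $\PP^{n-1}$ and $W'$ a lift of $H^0(\PP^{n-1},\mathcal{O}(1))$---into $\ker\psi=\mathcal{E}$; concretely the generators are the Euler-contraction forms $\iota_E\bigl(d\ell\wedge dx_{j_0}\wedge\cdots\wedge dx_{j_p}\bigr)$, with $E$ the Euler vector field and $\{j_0<\cdots<j_p\}\subseteq\{0,\dots,n-1\}$. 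Over $\PP^n\setminus\PP^{n-1}$ the map $\psi$ vanishes, so $\mathcal{E}=\mathcal{F}$ there, and a short calculation in the chart $\{\ell\ne 0\}$ shows that these generators form a frame of $\mathcal{F}$; hence $\phi$ is injective. As $\mathcal{O}_{\PP^n}^{\oplus\binom{n}{p+1}}$ and $\mathcal{E}$ have the same rank, the cokernel $Q$ of $\phi$ is a torsion coherent sheaf, and from $0\to\mathcal{O}_{\PP^n}(t)^{\oplus\binom{n}{p+1}}\to\mathcal{E}(t)\to Q(t)\to 0$, the vanishing $H^1(\PP^n,\mathcal{O}_{\PP^n}(t))=0$ for $t\ge 0$, and the Hilbert-function computation, one finds $h^0(\PP^n,Q(t))=0$ for all $t\ge 0$; since a nonzero coherent sheaf has a nonzero section after a sufficiently positive twist, $Q=0$ and $\phi$ is an isomorphism. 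Substituting $\mathcal{E}\cong\mathcal{O}_{\PP^n}^{\oplus\binom{n}{p+1}}$ back into the display of the first stage yields precisely (\ref{etgp}), with all squares commuting and all rows and columns exact, and this is by construction the display of $\mathrm{elm}_{\Omega^{p+1}_{\PP^{n-1}}(p+2)}\bigl(\Omega^{p+1}_{\PP^n}(p+2)\bigr)$.

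The main obstacle is the identification $\mathcal{E}\cong\mathcal{O}_{\PP^n}^{\oplus\binom{n}{p+1}}$---and within it the construction and injectivity of $\phi$, i.e. producing $\binom{n}{p+1}$ global sections of $\mathcal{E}$ that form a frame away from the hyperplane. Once that is in place, the Hilbert-function bookkeeping that forces the torsion cokernel to vanish is routine given Bott's formula, and everything else---the two standard exact sequences and the assembly of the display---is formal once $\psi$ has been chosen. (One may assume $0\le p\le n-1$; otherwise the statement is degenerate.)
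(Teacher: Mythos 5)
Your proposal is correct, and it shares the overall skeleton of the paper's argument: the middle column is the restriction sequence of $\mathcal{F}=\Omega^{p+1}_{\PP^n}(p+2)$ along the hyperplane, the bottom row is the $(p+1)$-st exterior power of the conormal sequence of $\PP^{n-1}$ twisted by $\mathcal{O}(p+2)$ (this is exactly the second claim in the paper's proof), and the left column comes from the kernel of the composite $\psi$ (the paper extracts it with the snake lemma, you describe it as $r^{-1}(\ker q)$ --- the same sequence). Where you genuinely diverge is on the crux, the identification $\mathcal{E}=\ker\psi\cong\mathcal{O}_{\PP^n}^{\oplus\binom{n}{p+1}}$, which is the real content of the theorem. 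The paper proves the untwisted form of this (its first claim) by induction on $p$: it maps wedge powers of the Euler sequence on $\PP^n$ onto those on $\PP^{n-1}$, passes to kernels, and argues the resulting top row splits because the relevant component of the map $\alpha$ has maximal rank off $\PP^{n-1}$. You avoid induction entirely: you produce the $\binom{n}{p+1}$ explicit global sections $\iota_E(d\ell\wedge dx_{j_0}\wedge\cdots\wedge dx_{j_p})$ of $\mathcal{E}$ from the Koszul presentation, check that they frame $\mathcal{F}$ on $\{\ell\neq 0\}$ (correct: in that chart they reduce to the standard basis $dy_{j_0}\wedge\cdots\wedge dy_{j_p}$ of $\Omega^{p+1}$), and then kill the torsion cokernel by the Bott-formula count; I verified that the $H^{\geq 1}$-vanishing you invoke holds in the stated range and that the Pascal-identity collapse gives $h^0(\mathcal{E}(t))=\binom{n}{p+1}\binom{n+t}{n}$, so the torsion argument closes. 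Your route buys an induction-free proof with an explicit trivialization of $\mathcal{E}$ (which is genuinely useful later, e.g.\ when evaluating sections in the Horace-method application of the last section), at the price of relying on Bott's formula and a chart computation; the paper's route is purely diagrammatic and cohomology-free, but its splitting/generic-rank step is the delicate point and is only sketched there. I see no gap in your argument.
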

The proof of this theorem above follows from the following set of claims. 
\begin{claim}
\begin{enumerate}[i)]
\item The kernel of the map $\Omega^{p+1}_{\PP^m}\longrightarrow \Omega^{p+1}_{\PP^{m-1}}$ is isomorphic to 
$\mathcal{O}_{\PP^m}(-p-2)^{\oplus \binom{m}{p+1}}$.
\item The sequence 
\begin{tikzpicture}[baseline= (a).base]
\node[scale=.8] (a) at (0,0){
 \begin{tikzcd} 0\arrow{r}
&\Omega^{p}_{\PP^{n-1}}(p)\arrow{r}
&\Omega^{p+1}_{\PP^n|\PP^{n-1}}(p+2)\arrow{r}
&\Omega^{p+1}_{\PP^{n-1}}(p+2)\arrow{r}&0\\
\end{tikzcd}};
\end{tikzpicture}
is exact.
\item The kernel of the map $\mathcal{O}_{\PP^n}^{\oplus \binom{n}{p+1}}\longrightarrow \Omega_{\PP^{n-1}}^{p}(2)$ 
is isomorphic to $\Omega^{p+1}_{\PP^n}(2)$. 
\end{enumerate}
\end{claim}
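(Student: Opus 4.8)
The plan is to establish the three parts of the Claim; Theorem \ref{mainprop} then follows by fitting them into the display \eqref{etgp} and observing that the resulting diagram has the shape required by the definition of an elementary transformation in Section \ref{sheavesandvect}. Throughout, I fix the hyperplane $\PP^{n-1}=\{x_n=0\}\subseteq\PP^n$ with inclusion $\iota$, and work with the twists exactly as they occur in \eqref{etgp}. The ingredients I would use are: the description of $\HH^0(\PP^m,\Omega^{p+1}_{\PP^m}(t))$ as the space of polynomial $(p+1)$-forms on $\mathbb{A}^{m+1}$, with coefficients homogeneous of degree $t-p-1$, annihilated by contraction with the Euler field $E=\sum_j x_j\,\partial_{x_j}$; the conormal sequence $0\to\OO_{\PP^{n-1}}(-1)\to\Omega^1_{\PP^n}|_{\PP^{n-1}}\to\Omega^1_{\PP^{n-1}}\to0$ and its exterior powers $0\to\Omega^{p}_{\PP^{n-1}}(-1)\to\Omega^{p+1}_{\PP^n}|_{\PP^{n-1}}\to\Omega^{p+1}_{\PP^{n-1}}\to0$ (a short exact sequence, not merely a filtration, since the sub-bundle is a line bundle); and the ideal-sheaf sequence $0\to\OO_{\PP^n}(-1)\to\OO_{\PP^n}\to\iota_*\OO_{\PP^{n-1}}\to0$, tensored with locally free sheaves. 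With this in hand, part (ii) is immediate: it is the $(p+1)$-st exterior power of the conormal sequence, tensored with $\OO_{\PP^{n-1}}(p+2)$.

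Part (i) is the substantive step. Let $\mathcal K$ be the kernel of the composite surjection $\Omega^{p+1}_{\PP^m}\twoheadrightarrow\Omega^{p+1}_{\PP^m}|_{\PP^{m-1}}\twoheadrightarrow\Omega^{p+1}_{\PP^{m-1}}$, where $\PP^{m-1}=\{x_m=0\}$. I would first record that $\mathcal K$ is locally free of rank $\binom m{p+1}$: the quotient $\iota_*\Omega^{p+1}_{\PP^{m-1}}$ is, locally on $\PP^m$, a free $\OO_{\PP^m}/(x_m)$-module, hence of homological dimension $1$ over $\OO_{\PP^m}$, so its kernel in the free sheaf $\Omega^{p+1}_{\PP^m}$ is locally free. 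To identify $\mathcal K$, I would exhibit an explicit frame of the twist $\mathcal K(p+2)$. For each $(p+1)$-subset $I=\{i_0<\dots<i_p\}\subseteq\{0,\dots,m-1\}$ set $\eta_I=x_m^{p+2}\,d(x_{i_0}/x_m)\wedge\cdots\wedge d(x_{i_p}/x_m)$; clearing denominators, this is the polynomial $(p+1)$-form $x_m\,dx_{i_0}\wedge\cdots\wedge dx_{i_p}\pm\sum_{k}x_{i_k}\,dx_{i_0}\wedge\cdots\widehat{dx_{i_k}}\cdots\wedge dx_{i_p}\wedge dx_m$, whose coefficients are linear and which is killed by $\iota_E$ (it equals $x_m^{p+2}$ times a form pulled back from $\PP^m$ on $\{x_m\neq0\}$), so $\eta_I$ is a global section of $\Omega^{p+1}_{\PP^m}(p+2)$; moreover each of its summands either vanishes on $\PP^{m-1}$ or contains the factor $dx_m$, which maps to $0$ in $\Omega^1_{\PP^{m-1}}$, so $\eta_I$ lies in $\mathcal K(p+2)$.

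The assignment $e_I\mapsto\eta_I$ then defines a morphism $\OO_{\PP^m}^{\oplus\binom m{p+1}}\to\mathcal K(p+2)$. Over $\{x_m\neq0\}$ it carries the standard basis to $x_m^{p+2}\,du_{i_0}\wedge\cdots\wedge du_{i_p}$, an invertible multiple of the standard frame, so it is an isomorphism there; the remaining issue is surjectivity along $\PP^{m-1}$, which I would settle by a local computation in a chart $\{x_j\neq0\}$ with $j\le m-1$ (coordinates $z_l=x_l/x_j$, $\PP^{m-1}=\{z_m=0\}$): there $\mathcal K$ is freely generated by $\{z_m\,dz_J:m\notin J\}\cup\{dz_J:m\in J\}$, and expanding the $\eta_I$ in these coordinates shows that those with $j\in I$ recover the second family of generators while those with $j\notin I$ recover the first modulo the second. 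Since a surjection of locally free sheaves of equal rank is an isomorphism, this gives $\mathcal K(p+2)\cong\OO_{\PP^m}^{\oplus\binom m{p+1}}$, i.e.\ $\mathcal K\cong\OO_{\PP^m}(-p-2)^{\oplus\binom m{p+1}}$. (An alternative would be Horrocks' splitting criterion: applying Bott's formula (Theorem \ref{bott}) to the cohomology sequence of $0\to\mathcal K(t)\to\Omega^{p+1}_{\PP^m}(t)\to\iota_*\Omega^{p+1}_{\PP^{m-1}}(t)\to0$ shows $\HH^i(\PP^m,\mathcal K(t))=0$ for $0<i<m$ and all $t$, so $\mathcal K$ splits into line bundles, after which a Hilbert-function count fixes the summands.) I expect the verification of the frame along the hyperplane to be the one genuinely delicate point; everything else is formal.

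Part (iii) then comes out by a diagram chase. I would apply the snake lemma to the diagram whose rows are $0\to\mathcal K\to\Omega^{p+1}_{\PP^n}\to\iota_*\Omega^{p+1}_{\PP^{n-1}}\to0$ and the $\iota$-pushforward of $0\to\Omega^{p}_{\PP^{n-1}}(-1)\to\Omega^{p+1}_{\PP^n}|_{\PP^{n-1}}\to\Omega^{p+1}_{\PP^{n-1}}\to0$, with middle vertical map the restriction $\Omega^{p+1}_{\PP^n}\to\iota_*(\Omega^{p+1}_{\PP^n}|_{\PP^{n-1}})$ and right vertical map the identity (they commute because the defining map $\Omega^{p+1}_{\PP^n}\to\Omega^{p+1}_{\PP^{n-1}}$ is their composite). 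The middle map is surjective with kernel $\Omega^{p+1}_{\PP^n}(-1)$, so the snake lemma yields $0\to\Omega^{p+1}_{\PP^n}(-1)\to\mathcal K\to\iota_*\Omega^{p}_{\PP^{n-1}}(-1)\to0$; tensoring with $\OO_{\PP^n}(p+2)$ and inserting $\mathcal K(p+2)\cong\OO_{\PP^n}^{\oplus\binom n{p+1}}$ from part (i) gives $0\to\Omega^{p+1}_{\PP^n}(p+1)\to\OO_{\PP^n}^{\oplus\binom n{p+1}}\to\Omega^{p}_{\PP^{n-1}}(p+1)\to0$, the first arrow being multiplication by $x_n$ — this is part (iii). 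To conclude Theorem \ref{mainprop}, take $\F=\Omega^{p+1}_{\PP^n}(p+2)$, the divisor $\PP^{n-1}\subset\PP^n$, and $\psi\colon\F\to\F'=\Omega^{p+1}_{\PP^{n-1}}(p+2)$ the restriction; then $\F(-\PP^{n-1})=\Omega^{p+1}_{\PP^n}(p+1)$, $\E=\ker\psi=\OO_{\PP^n}^{\oplus\binom n{p+1}}$ (part (i), twisted by $p+2$), and $\F''=\ker(\psi|_{\PP^{n-1}})=\Omega^{p}_{\PP^{n-1}}(p+1)$ (part (ii)), while the three rows and columns of \eqref{etgp} are, in order, the ideal-sheaf sequence tensored with $\F$, part (i) twisted, part (ii), part (iii) twisted, and the identity, the squares commuting by naturality of restriction. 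Thus \eqref{etgp} is a display of an elementary transformation with $\E=elm_{\F''}(\F)$, which is Theorem \ref{mainprop}.
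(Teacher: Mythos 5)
Your proposal is correct, and for parts (ii) and (iii) it follows essentially the paper's route: (ii) is in both cases the $(p+1)$-st exterior power of the conormal sequence of $\PP^{n-1}\subset\PP^n$ twisted by $\OO(p+2)$, and (iii) is in both cases a snake-lemma comparison in which the middle column is the ideal sequence of $\PP^{n-1}$ tensored with $\Omega^{p+1}_{\PP^n}(p+2)$ and the right-hand vertical map is an isomorphism, so the kernel on the left is forced to be $\Omega^{p+1}_{\PP^n}(p+1)$. Where you genuinely diverge is part (i), the substantive step. The paper argues by induction on $p$: it compares the Euler sequences (and their $(p+1)$-st exterior powers) on $\PP^n$ and $\PP^{n-1}$ via restriction maps, forms the $3\times 3$ diagram of kernels, and shows the top row splits because the relevant map of line-bundle summands is generically injective, the induction hypothesis entering through the kernel $\OO_{\PP^n}(-p-1)^{\oplus\binom{n}{p}}$ of the right-hand column. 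You instead trivialize the kernel directly, exhibiting the explicit global sections $\eta_I=x_m^{p+2}\,d(x_{i_0}/x_m)\wedge\cdots\wedge d(x_{i_p}/x_m)$ of $\mathcal K(p+2)$ and checking in the charts $\{x_m\neq0\}$ and $\{x_j\neq0\}$, $j<m$, that they form a frame, so that $\mathcal K(p+2)\cong\OO_{\PP^m}^{\oplus\binom m{p+1}}$ (your chart computation is the delicate point, and it does check out: for $j\in I$ one gets a unit times $dz_{(I\setminus j)\cup\{m\}}$, for $j\notin I$ one gets $z_m\,dz_I$ modulo the previous generators). Your method avoids the induction and the splitting argument entirely, needs no case $p=0$, and has the side benefit of producing explicit generators of the kernel, which is convenient for the evaluation-map applications sketched at the end of the paper; the paper's method stays at the level of the Euler sequence and diagram chases and never requires writing down sections. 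Your parenthetical Horrocks/Bott alternative would need one extra check (that the restriction $\HH^{p+1}(\PP^m,\Omega^{p+1}_{\PP^m})\to\HH^{p+1}(\PP^{m-1},\Omega^{p+1}_{\PP^{m-1}})$ is nonzero, to kill the intermediate cohomology of $\mathcal K$ at twist $0$), but since it is offered only as an alternative this does not affect your main argument. Note also that your twists ($\Omega^{p}_{\PP^{n-1}}(p+1)$ in (ii) and $\Omega^{p+1}_{\PP^n}(p+1)$ in (iii)) agree with the display in Theorem \ref{mainprop} and correct what appear to be typographical slips in the statement of the Claim itself.
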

\begin{proof}
\begin{enumerate}[i)]
\item We prove the first claim by induction on $p.$ For the base case, we set $p=0$ and prove that the kernel 
of the map $\Omega^{p+1}_{\PP^n}\longrightarrow \Omega^{p+1}_{\PP^{n-1}}$ is isomorphic to 
$\mathcal{O}_{\PP^n}(-p-2)^{\oplus \binom{n}{p+1}}$. 
\newline Consider the Euler sequence.
\[
\begin{tikzcd}
0\arrow{r}&\Omega_{\PP^{n}}\arrow{r}&\mathcal{O}_{\PP^n}(-1)^{n+1}\arrow{r}{s_n}&\mathcal{O}_{\PP^n}\arrow{r} &0
\end{tikzcd}
\]
We can construct a commutative diagram of exact sequences on $\PP^m;$
\[
\begin{tikzcd}
0\arrow{r}&\Omega_{\PP^n}\arrow{r}\arrow{d}{e}&\mathcal{O}_{\PP^n}(-1)^{\oplus n+1}\arrow{r}\arrow{d}{b}&\mathcal{O}_{\PP^n}\arrow{r}\arrow{d}{c}&0\\
0\arrow{r}&\Omega_{\PP^{n-1}}\arrow{r}&\mathcal{O}_{\PP^{n-1}}(-1)^{\oplus n}\arrow{r}&\mathcal{O}_{\PP^{n-1}}\arrow{r}&0
\end{tikzcd}
\]
where $\PP^{n-1}$ is identified with the locus of $\PP^n$ where the last coordinate vanish. The rows are 
the Euler sequences, $e$ is the restriction of forms, $c$ is the restriction of functions  and $b$ is the 
restriction on the first $n$ summands of $\mathcal{O}_{\PP^n}(-1)^{\oplus n+1}$ and is $0$ on the last summand.
Considering the kernels of $e,\ b$ and $c$ we get a commutative diagram with exact rows and columns,
\begin{equation}
\label{diag1}
\begin{tikzcd}
&0\arrow{d}&0\arrow{d}&0\arrow{d}&\\
0\arrow{r}&K\arrow{r}\arrow{d}&\mathcal{O}_{\PP^n}(-2)^{\oplus n}\oplus \mathcal{O}_{\PP^n}(-1)\arrow{r}{\alpha}\arrow{d}&\mathcal{O}_{\PP^n}(-1)\arrow{r}\arrow{d}&0\\
0\arrow{r}&\Omega_{\PP^n}\arrow{r}\arrow{d}&\mathcal{O}_{\PP^n}(-1)^{\oplus n+1}\arrow{r}{S_n}\arrow{d}&\mathcal{O}_{\PP^n}\arrow{r}\arrow{d}&0\\
0\arrow{r}&\Omega_{\PP^{n-1}}\arrow{r}\arrow{d}&\mathcal{O}_{\PP^{n-1}}(-1)^{\oplus n}\arrow{r}\arrow{d}&\mathcal{O}_{\PP^{n-1}}\arrow{r}\arrow{d}&0\\
&0&0&0&
\end{tikzcd}
\end{equation}
and we need to prove that $K\cong \mathcal{O}_{\PP^n}(-2)^{\oplus n}.$ It suffices to show that $\alpha$ sends 
the last summand $\mathcal{O}_{\PP^n}(-1)$ of $\mathcal{O}_{\PP^n}(-2)^{\oplus n}\oplus \mathcal{O}_{\PP^n}(-1)$ 
isomorphically to $\mathcal{O}_{\PP^n}(-1)$. As this summand is isomorphic to the codomain, it 
suffices to prove that the restriction of $\alpha$ to the summand is $\mathcal{O}_{\PP^n}(-1)$ is generically 
injective. This is true because the restriction of $S_n$ to the last summand is an isomorphism outside of 
$\PP^{n-1}.$ (homomorphism of line bundles on the complement of $\PP^{n-1}$). As a consequence the first row 
of \ref{diag1} splits and $k\cong \mathcal{O}_{\PP^n}(-2)^{\oplus n}$.
\newline Suppose now that the kernel of the map $\Omega^{p}_{\PP^n}\longrightarrow \Omega^{p}_{\PP^{n-1}}$ induced by 
the restriction of forms is isomorphic to $\mathcal{O}_{\PP^n}(-p-1)^{\oplus \binom{n}{p}}$. Consider the 
exact sequence below obtained from the Euler sequence of $\PP^n$.
\[
\begin{tikzcd}
0\arrow{r}&\Omega_{\PP^{n}}^{p+1}\arrow{r}&\wedge^{p+1}(\mathcal{O}_{\PP^n}(-1)^{n+1}\arrow{r}\arrow[equals]{d}&\mathcal{O}_{\PP^n}^{p}\arrow{r} &0\\
&&\mathcal{O}_{\PP^n}(-p-1)^{\oplus \binom{n+1}{p+1}}&&
\end{tikzcd}
\]
We can then construct the commutative diagram below with exact rows;
\[
\begin{tikzcd}
0\arrow{r}&\Omega_{\PP^n}^{p+1}\arrow{r}\arrow{d}{e}&\wedge^{p+1}(\mathcal{O}_{\PP^n}(-1)^{\oplus n+1})\arrow{r}\arrow{d}{b}&\Omega_{\PP^n}^{p}\arrow{r}\arrow{d}{c}&0\\
0\arrow{r}&\Omega_{\PP^{n-1}}^{p+1}\arrow{r}&\wedge^{p+1}(\mathcal{O}_{\PP^{n-1}}(-1)^{\oplus n})\arrow{r}&\Omega_{\PP^{n-1}}^{p}\arrow{r}&0
\end{tikzcd}
\]
where $e$ and $c$ are restrictions of forms and $b$ is described as follows. Recall that $\PP^{n-1}$ is the 
locus where the last coordinate varnishes and decompose $\mathcal{O}_{\PP^n}(-1)^{\oplus n+1}$ as the direct 
sum of the direct sum of the first $n$ summands and the last summand, that is,\\
$\mathcal{O}_{\PP^n}(-1)^{\oplus n+1}=\mathcal{O}_{\PP^n}(-1)^{\oplus n}\oplus \mathcal{O}_{\PP^n}(-1).$ 
The domain of $b$ is $\wedge^{p+1}\mathcal{O}_{\PP^n}(-1)^{\oplus n+1}=\\
\mathcal{O}_{\PP^n}(-p-1)^{\oplus \binom{n}{p+1}}\oplus \mathcal{O}_{\PP^n}(-p-1)^{\binom{n}{p}}$.
The codomain of $b$ is $\wedge^{p+1}(\mathcal{O}_{\PP^n}(-1)^{\oplus n})=\\
\mathcal{O}_{\PP^n}(-p-1)^{\oplus \binom{n}{p+1}}$. 
Thus $b$ sends $\wedge^p(\mathcal{O}_{\PP^n}(-1)^{\oplus n})\otimes \mathcal{O}_{\PP^n}(-1)$ to zero and its restriction to 
$\wedge^{p+1}(\mathcal{O}_{\PP^n}(-1)^{\oplus n})=\mathcal{O}_{\PP^n}(-p-1)^{\oplus \binom{n}{p+1}}$ is natural.
In particular, the kernel of $b$ is $\wedge^{p+1}(\mathcal{O}_{\PP^n}(-1)^{\oplus n})\otimes \mathcal{O}_{\PP^n}(-1)\oplus \wedge^p(\mathcal{O}_{\PP^n}(-1)^{\oplus n})\otimes \mathcal{O}_{\PP^n}(-1)=\\
\mathcal{O}_{\PP^n}(-p-2)^{\oplus \binom{n}{p+1}}\oplus \mathcal{O}_{\PP^n}(-p-1)^{\oplus \binom{n}{p}}$.
Considering the kernel of $a,\ b$ and $c$ we get a commutative diagram with exact rows and columns;
 \begin{equation}
\label{diag1}
\begin{tikzpicture}[baseline= (a).base]
\node[scale=.7] (a) at (0,0){
\begin{tikzcd}
&0\arrow{d}&0\arrow{d}&0\arrow{d}&\\
0\arrow{r}&K\arrow{r}\arrow{d}&\wedge^{p+1}(\mathcal{O}_{\PP^n}(-1)^{\oplus n})\mathcal{O}_{\PP^n}(-1)\oplus \wedge^p(\mathcal{O}_{\PP^n}(-1)^n)\otimes\mathcal{O}_{\PP^n}(-1)\arrow{r}{\alpha}\arrow{d}&\mathcal{O}_{\PP^n}(-p-1)^{\binom{n}{p}}\arrow{r}\arrow{d}&0\\
0\arrow{r}&\Omega_{\PP^n}^{p+1}\arrow{r}\arrow{d}&\wedge^{p+1}(\mathcal{O}_{\PP^n}(-1)^{\oplus n})\oplus\wedge^p(\mathcal{O}_{\PP^n}(-1)^{\oplus n})\otimes\mathcal{O}_{\PP^n}(-1)\arrow{r}{\gamma}\arrow{d}&\Omega^p_{\PP^n}\arrow{r}\arrow{d}&0\\
0\arrow{r}&\Omega_{\PP^{n-1}}^{p+1}\arrow{r}\arrow{d}&\wedge^{p+1}(\mathcal{O}_{\PP^{n-1}}(-1)^{\oplus n})\arrow{r}\arrow{d}&\Omega^p_{\PP^{n-1}}\arrow{r}\arrow{d}&0\\
&0&0&0&
\end{tikzcd}};
\end{tikzpicture}
\end{equation}
It suffices to prove that the first row splits. To do this, we prove that the restriction of 
$\alpha$ to $\wedge^p(\mathcal{O}_{\PP^{n-1}}(-1)^{\oplus n})\otimes\mathcal{O}_{\PP^{n}}(-1)$ is an isomorphism 
on $\mathcal{O}_{\PP^{n}}(-p-1)^{\oplus \binom{n}{p}}.$ Since we know that 
$\wedge^p(\mathcal{O}_{\PP^n}(-1)^n)\otimes\mathcal{O}_{\PP^n}(-1)$ is isomorphic to 
$\mathcal{O}_{\PP^n}(-p-1)^{\oplus \binom{n}{p}}$, it suffices to show that $\alpha$ is of maximal rank on 
all open subsets. This is because on the complement of $\PP^{n-1}$ the morphism $\gamma$ injects 
$\wedge^p(\mathcal{O}_{\PP^n}(-1)^n)\otimes\mathcal{O}_{\PP^n}(-1)$ in $\Omega^p_{\PP^n}.$
\item To prove the exactness of the bottom row,  
consider the exact sequence
\[
\begin{tikzcd}
0\arrow{r}&\mathcal{O}_{\PP^n}(-1)\arrow{r}&\Omega^{1}_{{\mathbb P}^{n}}|_{{\mathbb P}^{n-1}}\arrow{r}&\Omega^{1}_{{\mathbb P}^{n-1}}\arrow{r}&0.\\
\end{tikzcd}
\]
Taking the $(p+1)$-th exterior product and tensoring by $\mathcal{O}(p+2)$ we get the desired exact sequence.
\item Finally, to show that the kernel of the map $\mathcal{O}_{\PP^n}^{\oplus \binom{n}{p+1}}\longrightarrow \Omega_{\PP^{n-1}}^{p}(2)$ 
is isomorphic to $\Omega^{p+1}_{\PP^n}(2)$, we first recall that given a commutative diagram whose lines are two exact sequences.
\[
\begin{tikzcd}
0\arrow{r}&A\arrow{r}\arrow{d}&B\arrow{r}\arrow{d}&C\arrow{r}\arrow{d}&0\\
0\arrow{r}&D\arrow{r}&E\arrow{r}&F\arrow{r}&0\\
\end{tikzcd}
\]
there always exists a map $A\longrightarrow D$ still making the extended diagram commutative. Let $a,\ b$  and $c$ be the 3 
vertical maps appearing in this extended diagram. By the Snake lemma, we have an exact sequence
 \[
\begin{tikzpicture}[baseline= (a).base]
\node[scale=.6] (a) at (0,0){
\begin{tikzcd}
0\arrow{r}&ker(a)\arrow{r}&ker(b)\arrow{r}&ker(c)\arrow{r}&coker(a)\arrow{r}&coker(b)\arrow{r}&coker(c)\arrow{r}&0.\\
\end{tikzcd}};
\end{tikzpicture}
\]
Since $c$ is an isomorphism, $ker(a)=ker(b)$. As the second column of the diagram is obtained by tensoring
$\Omega^{p+1}_{\mathbb{P}^n}(p+2)$ with the exact sequence
\[
\begin{tikzcd}
0\arrow{r}&I\arrow{r}&\mathcal{O}_{{\mathbb P}^{n}}\arrow{r}&\mathcal{O}_{{\mathbb P}^{n-1}}\arrow{r}&0\\
\end{tikzcd}
\]
defining the ideal of $\mathbb{P}^{n-1}$ in $\mathbb{P}^{n},$ we have $ker(a)=ker(b)=\Omega^{p+1}_{{\mathbb P}^{n}}(p+1).$
\end{enumerate}
\end{proof}
\subsubsection*{conclusion}
In conclusion, lets see how the results above can be applied in the study of minimal free resolution.
\par Suppose $X$ is a smooth projective variety and $X'$ is a non-singular divisor of $X.$ Let $\F$ be a locally free sheaf 
on $X$ and
$$0\longrightarrow \F''\longrightarrow \F_{| X'}\longrightarrow \F'\longrightarrow 0$$
be an exact sequence of locally free sheaves on $X'.$ The kernel $\E$ of $\F\longrightarrow \F'$ is a locally free sheaf
on $X$ and we have another exact sequence of locally free sheaves on $X'$
$$0\longrightarrow \F' (-X')\longrightarrow \E_{| X'}\longrightarrow \F''\longrightarrow 0$$
and as well exact sequences of coherent sheaves on X
\begin{equation*}
\label{1}
0\longrightarrow \E \longrightarrow \F \longrightarrow \F'\longrightarrow 0
\end{equation*}
and
\begin{equation*}
\label{2}
0\longrightarrow \F(-X) \longrightarrow \E \longrightarrow \F'' \longrightarrow 0.
\end{equation*}
\begin{theo} [Differential method of Horace]
\label{dmh}
Suppose we are given a surjective morphism of vector spaces,
$\lambda :\H^0\left(X',\F'\right)\longrightarrow L$
and suppose that there exist a point $Z'\in X'$ such that
$\H^0\left(X',\F'\right)\hookrightarrow L\oplus \F'|_{Z'}$
and suppose that $\HH^1\left(X,\E\right)=0.$ Then there exist a quotient 
$\E(Z')\longrightarrow D(\lambda)$ with a kernel contained in $\F'(Z')$ 
of dimension $dim(D(\lambda))=\rk (\F)-dim(ker\lambda)$ having the following property.
Let $\mu :\H^0\left(X,\F\right)\longrightarrow M$ be a morphism of vector spaces, then 
there exist $Z\in X'$ such that if 
$\H^0\left(X,\E\right)\rightarrow M\oplus D(\lambda)$ is of maximal rank then 
$\H^0\left(X,\F\right)\longrightarrow M\oplus L\oplus \F(Z)$ 
is also of maximal rank.
\end{theo}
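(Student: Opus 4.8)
The plan is to read the statement off the four exact sequences recorded just above, together with the long exact cohomology sequence, an explicit construction of the fibre quotient $D(\lambda)$, and a transfer of maximal rank of Snake/Five lemma type. First, applying $\HH^0$ to $0\to\E\to\F\to\F'\to 0$ on $X$ and using $\HH^1(X,\E)=0$ to kill the connecting map gives a short exact sequence
\[
0\longrightarrow\HH^0(X,\E)\longrightarrow\HH^0(X,\F)\xrightarrow{\ \rho\ }\HH^0(X',\F')\longrightarrow 0
\]
where $\rho$ is restriction to $X'$ followed by $\F|_{X'}\twoheadrightarrow\F'$; in particular $\HH^0(X,\E)=\ker\rho$ and $\hh^0(X,\F)=\hh^0(X,\E)+\hh^0(X',\F')$. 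Restricting the sequence $0\to\F'(-X')\to\E|_{X'}\to\F''\to 0$ of locally free sheaves on $X'$ to the point $Z'$ gives an inclusion of fibres $\F'(Z')\cong\F'(-X')(Z')\hookrightarrow\E(Z')$, and $\dim\E(Z')=\rk\E=\rk\F$ since $\E$ and $\F$ agree off the divisor.

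Next I would construct $D(\lambda)$. The hypothesis that $(\lambda,\mathrm{ev}_{Z'})\colon\HH^0(X',\F')\to L\oplus\F'(Z')$ is injective says precisely that $\mathrm{ev}_{Z'}$ is injective on $\ker\lambda$; let $N\subseteq\F'(Z')$ be its image, so $\dim N=\dim\ker\lambda$. Viewing $N\subseteq\F'(Z')\subseteq\E(Z')$ via the first step, set $D(\lambda):=\E(Z')/N$. Then $D(\lambda)$ is a quotient of $\E(Z')$ whose kernel $N$ is contained in $\F'(Z')$ and has $\dim D(\lambda)=\rk\F-\dim\ker\lambda$, exactly the object named in the statement; composing the fibre evaluation $\HH^0(X,\E)\to\E(Z')$ with $\E(Z')\twoheadrightarrow D(\lambda)$ and with $\mu|_{\HH^0(X,\E)}$ produces the map $\HH^0(X,\E)\to M\oplus D(\lambda)$. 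Observe that the injectivity hypothesis has already been spent here, to give $D(\lambda)$ its advertised dimension.

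Now assume $\HH^0(X,\E)\to M\oplus D(\lambda)$ is of maximal rank. The transfer is organized around a commutative ladder with exact rows,
\[
\begin{tikzcd}[column sep=small]
0\arrow{r}&\HH^0(X,\E)\arrow{r}\arrow{d}&\HH^0(X,\F)\arrow{r}{\rho}\arrow{d}&\HH^0(X',\F')\arrow{r}\arrow{d}&0\\
0\arrow{r}&M\oplus D(\lambda)\arrow{r}&M\oplus L\oplus\F(Z)\arrow{r}&Q\arrow{r}&0
\end{tikzcd}
\]
whose middle vertical is the evaluation map $\mu\oplus(\lambda\circ\rho)\oplus\mathrm{ev}_Z$ of the conclusion and whose left vertical is the map just built. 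The bottom row has to be arranged so that $M\oplus D(\lambda)$ records the conditions already ``seen'' by $\HH^0(X,\E)$ and the cokernel $Q$ records the conditions visible on $X'$; the Horace specialization is the choice of the point $Z\in X'$ that makes this work, the tangential-to-$X'$ part of the fibre condition at $Z$ being absorbed into $L$ through the injection $\HH^0(X',\F')\hookrightarrow L\oplus\F'(Z')$. Granting that, a dimension count gives $\dim Q=\dim L+\dim\ker\lambda=\hh^0(X',\F')$ (using that $\lambda$ is onto), which equals the dimension drop $\hh^0(X,\F)-\hh^0(X,\E)$ along the top row, and the induced right vertical $\HH^0(X',\F')\to Q$ is injective, once more by the hypothesis that $(\lambda,\mathrm{ev}_{Z'})$ is injective. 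With both outer verticals of maximal rank and the two rows having matching dimension drops, the Snake lemma together with the Five lemma force the middle vertical $\HH^0(X,\F)\to M\oplus L\oplus\F(Z)$ to be of maximal rank; this is the assertion, with $Z$ the point just selected.

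The main obstacle is the construction of the bottom row of that ladder together with the correct point $Z$: one must show that, after specializing $Z$ onto $X'$, the ``residual'' of the fibre condition of $\F$ at $Z$ is exactly $D(\lambda)$ and that the leftover part maps, compatibly with $\rho$, onto the cokernel $Q$ with the right vertical of maximal rank. This is the substance of the differential Horace method, and it is where the smoothness of the divisor $X'\subset X$, the identification $\F'(-X')(Z')\cong\F'(Z')\subseteq\E(Z')$, and the injectivity of $\HH^0(X',\F')\to L\oplus\F'(Z')$ must all be used simultaneously; the cohomological bookkeeping of the first step and the Snake/Five lemma transfer of the last are by comparison routine.
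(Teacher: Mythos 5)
Your bookkeeping is fine as far as it goes: the cohomology sequence using $H^1(X,\E)=0$, the identification $\F'(-X')(Z')\hookrightarrow\E(Z')$, the construction $D(\lambda)=\E(Z')/N$ with $N=\mathrm{ev}_{Z'}(\ker\lambda)$, and the dimension counts are all correct. But the step you explicitly defer --- ``the Horace specialization is the choice of the point $Z\in X'$ that makes this work'' --- is not routine; it is the entire content of the theorem, and the ladder you propose cannot be built the way you describe. Take $Z=Z'$ (the only point your construction produces). Tensoring $0\to\E\to\F\to\F'\to0$ with $k(Z')$ gives $0\to\F'(-X')(Z')\to\E(Z')\to\F(Z')\to\F'(Z')\to0$, so the kernel of the fibre map $\E(Z')\to\F(Z')$ is the \emph{whole} copy of $\F'(Z')$ inside $\E(Z')$, of dimension $\rk\F'$, whereas $D(\lambda)$ is obtained by quotienting only by $N$, of dimension $\dim\ker\lambda$. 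Hence a section $s\in H^0(X,\E)$ with $\mathrm{ev}_{Z'}(s)=0$ in $\F(Z')$ can perfectly well have nonzero image in $D(\lambda)$: there is no injection $D(\lambda)\to\F(Z')$ compatible with evaluation, the left square of your ladder does not commute, and the Snake/Five transfer never gets started. (Your implicit injectivity chase breaks at the same spot: from the vanishing of $(\mu,\lambda\rho,\mathrm{ev}_{Z'})(s)$ one gets $s\in H^0(X,\E)$ and $\mathrm{ev}_{Z'}(s)\in\F'(-X')(Z')$, but not that its class in $D(\lambda)$ vanishes, unless $\dim\ker\lambda=\rk\F'$, i.e.\ the non-differential Horace case.)

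What is actually required --- and what Hirschowitz and Simpson prove in \cite{h} --- is a degeneration argument: one lets a point $Z_t\notin X'$ tend to $Z'\in X'$, shows that the flat limit of the fibre condition $\F(Z_t)$ decomposes with trace part governed by $\lambda$ (this is where the injectivity of $H^0(X',\F')\to L\oplus\F'|_{Z'}$ and the first-order, ``differential'' information along $X'$ enter) and residual part exactly $D(\lambda)$, and then concludes by semicontinuity of the rank that maximal rank for the limit conditions implies maximal rank for a suitable $Z$. Your proposal, which stays at a fixed point of $X'$ and hopes for a commutative ladder, cannot reach this. For comparison, the paper itself gives no proof either: it records the statement from \cite{h} and, in Remark \ref{remdmh}, the same five-lemma-shaped diagram you wrote down, so your attempt reproduces the paper's heuristic picture while leaving the differential specialization --- the actual proof --- unsupplied.
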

\begin{remark}
\label{remdmh}
The idea of the theorem is illustrated in the diagram below.
\[
\begin{CD}
0@>>>\H^0(X,\E)@>>>\H^0(X,\F)@>>>\H^0 (X',\F')@>>>0\\
@.@V\alpha_1VV@V\alpha_2VV@V\alpha_3VV@.\\
0@>>>M\oplus D(\lambda)@>>>M\oplus L \oplus \F(Z)@>>>L\oplus D'(\lambda)|_{Z}@>>>0
\end{CD}
\]
The key point is that if the map $\alpha_3$ is bijective, then $\alpha_2$ will be bijective provided 
that $\alpha_1$ is bijective.
\end{remark}
The elementary transformation in theorem \ref{mainprop} above can be used in proving inductively that the map 
\begin{equation}
\label{maxranpn}
\H^0 \left(\PP^n,\Omega^{p+1}_{\PP^n}(d+p+1)\right)\longrightarrow \bigoplus _{i=1}^{s}\Omega^{p+1}_{\PP^n}(d+p+1)|_{P_i}
\end{equation} 
is of maximal rank for a fixed $p$, for all non negative integers $d\geq m.$ To see this, 
set $X=\PP^n,\ X'=\PP^{n-1},\ \F =\Omega^{p+1}_{\PP^n} \F'=\Omega^{p}_{\PP^{n-1}}$ and 
$\E =\mathcal{O}_{\PP^n}(2)^{\oplus \binom{n}{p+1}}$ in the diagram under remark \ref{remdmh}. We can also construct a 
diagram similar to this using the sequence 
\begin{equation*}
\label{2}
0\longrightarrow \H^0(\PP^n,\Omega^{p+1}_{\PP^{n-1}}(d)) \longrightarrow \H^0(\PP^n,\mathcal{O}_{\PP^n}(2)^{\oplus \binom{n}{p+1}}(d-1)) \longrightarrow \H^0(\PP^{n-1},\Omega^{p}_{\PP^{n-1}}) \longrightarrow 0.
\end{equation*}
Call the three vertical maps in this second diagram $\alpha_1',\ \alpha_2'$ and $\alpha_3'.$ By construction of the 
map $\alpha_1$ coincides with the map $\alpha_2'.$ In reference to remark \ref{remdmh}, we have that $\alpha_2$ is 
bijective provided $\alpha_1$ is, and $\alpha_1$ is bijective provided that $\alpha_1'$ is. Bijectivity of $\alpha_2$ 
is a statement on forms of degree $d+1$ and bijectivity of $\alpha_1'$ is a statement of forms of degree $d$. It 
therefore follows that if we can prove such bijectivity for $d=m,$ and also prove that the bijectivity of $\alpha_1$ 
implies bijectivity of $\alpha_2$ and bijectivity of $\alpha_1'$ implies bijectivity of $\alpha_1,$ then it will 
follow by induction that the map \ref{maxranpn} is of maximal rank for all $d\geq m.$ 


\begin{thebibliography}{99}
\bibitem {hart} Hartshorne R (1977) Algebraic Geometry GTM 52, Springer verlag.
\bibitem {h} Hirschowitz, A. and Simpson, C. (1996); \textit{La r\'esolution minimale de l'id\'eal d'un arrangement g\'en\'eral d'un grand nombre de points dans $\PP^n$}.
\bibitem {a} Lorenzini, A. (1993) \textit{The Minimal Resolution Conjecture,} J. Algebra \textbf{156}, 5-35.
\bibitem {mm} Maruyama, M. $(1982);$ \textit{On a family of algebraic vector bundles}, http://dx.doi.org/10.14989/doctor.r2072 
\bibitem {cmh} Okonek C.,Schneider M. and Spindler H. (1980);Vector Bundles on Complex Projective Spaces
\bibitem {igr} Shafarevich I. (1986); Algebraic Geometry II:Cohomology of Algebraic Varieties. Algebraic Surfaces, Encyclopaedia of Mathematical Sciences Volume 35.
\end{thebibliography}
\end{document}